\theoremstyle{plain}
\newtheorem{theorem}{Theorem}[section]
\newtheorem{lemma}[theorem]{Lemma}
\theoremstyle{definition}
\newtheorem{definition}[theorem]{Definition}
\newtheorem{example}[theorem]{Example}
\numberwithin{equation}{section}
\DeclareMathOperator{\Rp}{Re}
\DeclareMathOperator*{\esssup}{ess\,sup}
\DeclareMathOperator*{\essinf}{ess\,inf}
\DeclareMathOperator{\rc}{rc}
\DeclareMathOperator{\lv}{lv}
\newcommand{\family}[1]{#1}
\begin{document}

\title[Good Lambda and Variable Orlicz Hardy Spaces]%
{Good Lambda Inequalities and Variable Orlicz Hardy Spaces}
\author{Timothy Ferguson}
\address{Department of Mathematics\\University of Alabama\\Tuscaloosa, AL}
\email{tjferguson1@ua.edu}

\date{\today}

\newcommand{\sci}{slowly changing on intervals}
\newcommand{\scifm}{\sci\ for modulus}
\newcommand{\udoub}{uniformly doubling}

\begin{abstract}
  We prove a general theorem showing that
  local good-$\lambda$ inequalities imply bounds
  in certain variable Orlicz spaces.  We use this
  to prove results about 
  variable Orlicz Hardy spaces 
  in the unit disc.
\end{abstract}

\dedicatory{In memoriam Peter Duren}





\maketitle

\section{Introduction}

There are two main topics in this paper.  
To understand the first topic, it helps to know that results about
variable exponent Lebesgue spaces are often proven using
Rubio de Francia's extrapolation theory
\cite{DCU_VarLebesgueBook}.  This relies on
establishing weighted $L^p$ bounds, which are in turn
often proven by using local good-$\lambda$ inequalities.  The
first part of this paper shows how to obtain bounds for 
certain variable exponent $L^p$ spaces
directly from
local good-$\lambda$ inequalities.  Our result also holds for
variable Orlicz spaces. 

The second topic of the paper is the application of the results
mentioned above to develop a theory of variable Orlicz
Hardy spaces.
In particular, we give examples of variable
exponent Hardy spaces in which the exponent can approach $0$ in a sense.    
%
%
We work on the unit disc, since this seems to be the
case easiest to understand at first.
For work on variable exponent Hardy spaces, see
\cite{Kokilashvili-Paatashvili2006, Kokilashvili-Paatashvili2008,
  Kokilashvili-Paatashvili2015, DCU-VariableHardy, MR3220151, MR2899976}.

We now introduce some preliminaries needed to understand the
paper.  

\begin{definition}
For each
$x$ in the unit circle 
suppose there is a continuous increasing function
$\Phi_x:[0,\infty) \rightarrow [0, \infty)$.
We will also require that for each
$x$, both $\Phi_x(0) = 0$ and $\Phi_x(t) \rightarrow \infty$ as
$t \rightarrow \infty$. 
A family of functions $\family{\Phi}$ as above will be called a 
{\itshape modular family}. 
\end{definition}
We sometimes use the term ``family'' for short. 

We will often work in the case where for each
interval $I$ there are increasing functions
$\Phi_{I,+}$ and $\Phi_{I,-}$ such that
\[
  \Phi_{I,-}(t) \leq \Phi_x(t) \leq \Phi_{I,+}(t)
\]
for all $x$ in the closure of $I$
and for all $ t \in [0,\infty)$.  (By interval we mean an
arc of the unit circle in this context).  Often we take
$\Phi_{I,+}(t) = \sup_{x \in \overline{I}} \Phi_x(t)$ and
similarly for $\Phi_{I, -}$ with the supremum replaced by
an infimum.  We will assume that 
the $\Phi_{I, \pm}$ are the same for any interval and its closure,
and
that if $I \subset J$ that
\[
  \Phi_{J, -}(t) \leq   \Phi_{I, -}(t)  \leq   \Phi_{I, +}(t)  \leq
  \Phi_{J, +}(t)
\]
for all $t \in [0,\infty)$.

For any function $\Phi_x$ in our
family, 
we will define $\Phi_x(z) = \Phi_x(|z|)$ if
$z$ is not a nonnegative real number.

\begin{definition}
If there is some constant
$C'$ such that $\Phi_x(2\lambda) \leq C' \Phi_x(\lambda)$
for any $x$, then the 
modular family $\family{\Phi}$ of these functions is said to be \udoub.  
$\mathfrak{\Phi}$
\end{definition}

For a function $f$ we define its
modular by
\[
  \rho_{\Phi}(f) =
  \int_0^1 \Phi_x(f(x)) \, dx.
\]
We define its norm by
\[
  \|f\|_{\Phi} = \inf\left\{\lambda:
    \int_0^1 \Phi_x(f(x)/\lambda) \, dx < 1
    \right\}.
\]
We sometimes omit the subscript on the norm.  We let $L_{\Phi}$
be the set of all functions $f$ such that $\|f\|_{\Phi}$ is finite.
Note that the name of ``norm'' does not imply that 
$\| \cdot \|$ is a vector space norm.  

The dominated convergence theorem shows that if
$\rho_{\Phi}(f) < \infty$ then $\|f\|_{\Phi} < \infty$.  
However, there is in general no result that says if
$\rho_{\Phi}(f) < A$ then $\|f\|_{\Phi} < B$ for any
$A > 1$.  To see this, let
$\Phi_x(\lambda) = \Phi(\lambda) = (\log^+(\lambda))^2$
(this example will be relevant later).  Let
$f(x) = \chi_{0,1/n^2} e^{n\sqrt{A}}$ for large $n$.  Then
$\rho(f) = A$ but $\|f\| = e^{(\sqrt{A}-1)n}$.
If the family $\family{\Phi}$ is \udoub\
with constant $C'$ then we can say that
if $\|f\| \leq A$ then there is some number $B$ depending
only on $C'$ and $A$ such that $\rho(f) \leq B$.
Thus, for uniformly doubling families,
$L_{\Phi}$ is equivalent to the space of functions with
bounded modular. 

Because of the above observations, it is important to pay
attention to the distinction between the norm and the modular.  
Also because of the above observations, we introduce a
modified norm for $a > 0$ defined by 
\[
  \|f\|_{\Phi, (a)} = \inf\left\{\lambda:
    \int_0^1 \Phi_x(f(x)/\lambda) \, dx < a
  \right\}.
\]
We sometimes write $\|f\|_{(a)}$ if the family
$\family{\Phi}$ is clear. 
By similar reasoning to what we have written above,
we have that $\|f\|_{(a)}$ is finite if the
modular is finite, and if the family is \udoub\ and 
$\|f\|_{(a)}$ is finite then the modular is finite.  
However, there is no relation that
says that if $\rho(f) < A$ then $\|f\|_{(a)} < B$ for any
$A > a$.  Also, it is clear that
$\|f\|_{(a)} < \|f\|_{(b)}$ if $a > b$.
Something else to note is that if $\Psi(y) = a \Phi(y)$
then $\|\cdot\|_{\Psi,(a)} = \|\cdot\|_{\Phi}$. 

By the dominated convergence theorem, since $\Phi_x$ is continuous
for each $x$, if 
$\rho(f) < \infty$ then $\rho(c f)$ is a continuous function of 
$c$ for $|c| \leq 1$.  Also, if $\rho(f) < \infty$ and the family 
$\Phi$ is uniformly doubling, then $\rho(cf) < \infty$ for any 
$c$, so in this case $\rho(cf)$ is a continuous function of $c$ for 
all $c$.
If this is the case, then because for each $x$, $\Phi_x(0) = 0$ and
$\Phi_x(t) \rightarrow \infty$ as $t \rightarrow \infty$, then 
for each $a > 0$,
there will be a $c > 0$ such that $\rho(f/c) = a$.
This $c$ will be unique if each $\Phi_x$ is strictly increasing. 

We now introduce the definition of \sci.  We will
first motivate the definition.  
Suppose that there is some constant $C$ depending on
$B > 0$ such that if
$\rho_{\Phi}(f) \leq B$ then 
  on each interval in the level set $\{x: f(x) > \lambda\}$
  we have
  $\Phi_{I,+}(\lambda) \leq C \Phi_{I,-}(\lambda)$.
  Note that if $I$ is an interval in the level set of such
  an $f$, we must have $m(I) \leq B/\Phi_{I,-}(\lambda)$
  by the restriction on $f$.  So we may satisfy the condition
  $\Phi_{I,+}(\lambda) \leq C\Phi_{I,-}(\lambda)$ 
  if for each $\lambda > 0$
  and each interval $I$ of
  length at most $B/\Phi_{I,-}(\lambda)$, we have
  $\Phi_{I,+}(\lambda) \leq C \Phi_{I,-}(\lambda)$.
  Another way to say this is as follows.

\begin{definition}
  We say the modular family $\family{\Phi}$ is \sci\ if for every $B>0$ there
  is a constant $C > 0$ such that 
  for each interval $I$ and each number $b \geq m(I)/B$ we have
  \[
    \Phi_{I,+}(\Phi_{I,-}^{-1}(1/b)) \leq C/b.
  \]
  Equivalently, we may require that for each $B > 0$ there is
  a constant $C > 0$ such that for each number
  $w \leq \Phi^{-1}_{I,-}(B/m(I))$, we have
  \[
    \Phi_{I,+}(w) \leq C \Phi_{I,-}(w).
  \]
\end{definition}

We record our observations in a lemma.
\begin{lemma} \label{lemma:sci}
  Suppose that the modular family 
  $\family{\Phi}$ is \sci.  Then for any $B > 0$
  there is a constant $C$ such that if
  $\rho(f) \leq B$
  then on each interval $I$ in
  the level set of $\{x: f(x) > \lambda\}$ we have
  $\Phi_{I, +}(\lambda) \leq C \Phi_{I, -}(\lambda)$.
\end{lemma}

\begin{example}\label{ex:powersci}
 Suppose $\Phi_x(\lambda) = \lambda^{p(x)}$ for some log-H\"{o}lder 
 continuous exponent $p$ with $0 < p_{-} \leq p_{+} < \infty$,
 where $p_{-} = \essinf p(x)$ and $p_{+} = \esssup p(x)$.  
 Log-H\"{o}lder continuous means that
 \[
   |p(x) - p(y)| \leq \frac{C}{\log \frac{4\pi}{|x-y|}}
 \]
 for some constant $C$. 
Define $\Phi_{I,+}(\lambda) = \sup_{x \in \overline{I}} \Phi_x(\lambda)$ 
and similarly for $\Phi_{I,-}$ with $\inf$ in place of $\sup$.  

The family of such functions $\Phi$ is \sci. 
To see this, suppose that $I$ is an interval and that 
$x, y \in I$.  
Suppose $\Phi_{I,-}(w) \leq B/m(I).$  
Now if $w \geq 1$ this implies that 
$w \leq (B/m(I))^{1/p_{-}}$, and if 
$w \leq 1$ this implies that 
$w \leq (B/m(I))^{1/p_{+}}$.  Then 
$w \leq (B^{1/p_{+}})/|x-y|^{1/p_{+}}$ or 
$w \leq B^{1/p_{-}}/|x-y|^{1/p_{-}}$.    
Note that
\[
|x-y|^{p_{I,+}-p_{I,-}} \leq \exp\left(-C\log(|x-y|)/\log\left
    (\tfrac{4\pi}{|x-y|}\right)\right) 
\]
which is bounded by some constant independent of $I$.  Similarly,
$|x-y|^{p_{I,-}-p_{I,+}}$ is bounded by a constant independent of $I$.
Thus, $w^{p_{I,-}-p_{I,+}}$ and $w^{p_{I,+}-p_{I,-}}$ are both bounded
by a constant independent of $I$, so the family $\family{\Phi}$ is \sci.

We note also that if we define
$\Phi_x(\lambda) = \max(1, \lambda^{p(x)})$ that the family
$\family{\Phi}$ is still \sci.  The reasoning is similar to the 
above but slightly easier since then $\Phi_x(w) = 1$ for 
all $x$ and $w \leq 1$.  We might also define
\[
  \Phi_x(\lambda) = \begin{cases} \lambda &\text{ if $\lambda < 1$}\\
    \lambda^p(x) &\text{ if $\lambda \geq 1$}
  \end{cases}
\]
and the family will still be slowly chaning on intervals. 
\end{example}

\begin{example}\label{ex:logsci} 
Let $1 < s < \infty$ be given and suppose that 
$p(x)$ is $1/s$ H\"{o}lder continuous, where 
$0 \leq p(x) < \infty$ for all $x$. 
Let
\[
\Phi_{x}(t) = \begin{cases} 
            t &\text{ if $0 \leq t \leq 1$}\\
            \max(t^{p(x)}, (1+\log^+ t)^s) &\text{ if $t > 1$}.\\
            \end{cases}
\]
We claim the family defined by $\Phi_x$ is \sci.
Note that if $I$ is an interval
and $w \leq \Phi_{I,-}^{-1}(B/m(I))$ then 
$\Phi_{I,-}(w) \leq B/m(I)$ and so 
$(1+ \log^+(w))^s \leq B/m(I)$ if $w \geq 1$.
This implies that
$w \leq \exp(B^{1/s}/m(I)^{1/s})$ if $w \geq 1$. 

Now let $x,y \in I$ and assume as above that
 $w \leq \Phi_{I,-}^{-1}(B/m(I))$.
Assume without loss of generality that 
$p(x) > p(y)$.  If $w \leq 1$ then $\Phi_{x}(w) = \Phi_{y}(w)$. 
This also happens if $w^{p(x)} \leq (1+\log^+ w)^s$.  
If $w^{p(x)} \geq (1+\log^+ t)^s \geq w^{p(y)}$ then 
$\Phi_{x}(w)/\Phi_{y}(w) \leq w^{(p(x)-p(y))}$.  Thus in any 
event for $w \geq 1$ we have
\[
  \frac{\Phi_{x}(w)}{\Phi_{y}(w)} \leq
  w^{(p(x)-p(y))} \leq
  \exp\left(\frac{B^{1/s}|p(x)-p(y)|}{|x-y|^{1/s}}\right)
  \leq e^{B^{1/s}C}
\]
where $C$ is the constant in the definition of H\"{o}lder 
continuity.
\end{example}

\begin{lemma} \label{lemma:scicomp}
  Suppose that $\Psi$ is continuous and
  increasing and that its domain 
includes $[0,\infty)$. 
  If the modular family $\family{\Phi}$ is \sci, then so is 
$\family{\Phi} \circ \Psi$.
\end{lemma}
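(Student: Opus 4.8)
The plan is to carry out the verification using the ``$w$-form'' of the \sci\ condition stated in the definition (the one asking $\Phi_{I,+}(w) \leq C\Phi_{I,-}(w)$ for $w$ with $\Phi_{I,-}(w) \leq B/m(I)$) rather than the form involving $\Phi_{I,-}^{-1}$, since $\Psi$ need be neither continuous nor strictly increasing and so its inverse is awkward. Throughout, $\pmb{\Phi}\circ\Psi$ means the family with pointwise members $\Phi_x\circ\Psi$ and interval bounds $\Phi_{I,+}\circ\Psi$ and $\Phi_{I,-}\circ\Psi$; we may and do assume $\Psi(t)\in[0,\infty)$ for all $t\in[0,\infty)$ (automatic once $\Psi(0)\geq 0$, and in any case needed for $\Phi_x\circ\Psi$ to be increasing on $[0,\infty)$).

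First I would check that $\pmb{\Phi}\circ\Psi$ is a modular family. Each $\Phi_x\circ\Psi$ is increasing from $[0,\infty)$ to $[0,\infty)$ as a composition of increasing functions; for $x\in\overline{I}$ and $t\geq 0$ we have $\Psi(t)\in[0,\infty)$, so $\Phi_{I,-}\leq\Phi_x\leq\Phi_{I,+}$ on $[0,\infty)$ gives $\Phi_{I,-}(\Psi(t))\leq\Phi_x(\Psi(t))\leq\Phi_{I,+}(\Psi(t))$; and $I\subset J$ yields $\Phi_{J,-}\circ\Psi\leq\Phi_{I,-}\circ\Psi\leq\Phi_{I,+}\circ\Psi\leq\Phi_{J,+}\circ\Psi$. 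The definition of a modular family requires only \emph{some} increasing majorant and minorant, not the literal supremum and infimum over $\overline{I}$, so it does no harm that $\sup_{x\in\overline{I}}\Phi_x(\Psi(t))$ might lie strictly below $\Phi_{I,+}(\Psi(t))$. The functions $\Phi_{I,\pm}\circ\Psi$ plainly do not distinguish an interval from its closure, and the convention $\Phi(z)=\Phi(|z|)$ for $z$ not a nonnegative real is imposed on the composed functions directly.

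Now the \sci\ property, which is the substance and is essentially immediate. Fix $B>0$ and let $C=C(B)$ be the constant furnished by the \sci\ property of $\pmb{\Phi}$. Let $I$ be an interval and $w\geq 0$ with $(\Phi_{I,-}\circ\Psi)(w)\leq B/m(I)$. Put $v=\Psi(w)\in[0,\infty)$; then $\Phi_{I,-}(v)\leq B/m(I)$, so $\Phi_{I,+}(v)\leq C\,\Phi_{I,-}(v)$, i.e.\ $(\Phi_{I,+}\circ\Psi)(w)\leq C\,(\Phi_{I,-}\circ\Psi)(w)$. Since $B$ was arbitrary and $C=C(B)$, the family $\pmb{\Phi}\circ\Psi$ is \sci\ with the same constants; and if $\pmb{\Phi}$ is only \scifm\ $B$ for one fixed $B$, the same line shows $\pmb{\Phi}\circ\Psi$ is \scifm\ $B$. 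I do not anticipate any genuine obstacle: the only care needed is the modular-family bookkeeping above and the choice of the $w$-form of \sci\ to avoid inverting $\Psi$ (if one wants the $\Phi_{I,-}^{-1}$-form instead, take $(\Phi_{I,-}\circ\Psi)^{-1}=\Psi^{-1}\circ\Phi_{I,-}^{-1}$ with generalized inverses and repeat the argument with $v=\Phi_{I,-}^{-1}(1/b)$).
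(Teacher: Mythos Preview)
Your proof is correct and is essentially the same as the paper's: both set $v=\Psi(w)$ (the paper calls it $z$) and reduce directly to the \sci\ property of $\pmb{\Phi}$. The only cosmetic difference is that you use the ``$w$-form'' of the definition (requiring $\Phi_{I,-}(w)\leq B/m(I)$), while the paper uses the equivalent $\Phi_{I,-}^{-1}$-form you mention at the end; your choice is arguably cleaner since it sidesteps any question of inverting $\Psi$, and your extra verification that $\pmb{\Phi}\circ\Psi$ is a modular family is a nice bit of bookkeeping the paper omits.
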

By $\family{\Phi} \circ \Psi$ we mean the family with function
$\Phi_x \circ \Psi$ at $x$. 
\begin{proof}
Let $B > 0$ and let $I$ be an interval and 
  suppose that $w \leq \Psi^{-1} \circ \Phi_{I,-}^{-1} (B/m(I))$.
  Let $z = \Psi(w)$.  Then $z \leq \Phi_{I,-}^{-1}(B/m(I))$. 
  Thus $\Phi_{I,+}(z) \leq C \Phi_{I,-}(z)$ so
  $\Phi_{I,+} \circ \Psi (w) \leq C \Phi_{I,-} \circ \Psi(w)$. 
\end{proof}

\begin{lemma}\label{lemma:powersci}
  If the modular family $\family{\Phi}$ is \sci\ so is the family
  $\Phi^p$ for any $p > 1$.
\end{lemma}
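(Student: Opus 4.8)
The plan is to argue directly from the second (``equivalently'') form of the definition of \sci. First I would pin down what the family $\pmb{\Phi}^p$ (written $\Phi^p$ in the statement) is: its members are the functions $(\Phi_x)^p$, with envelopes $(\Phi_{I,+})^p$ and $(\Phi_{I,-})^p$. Since $t \mapsto t^p$ is increasing on $[0,\infty)$, every structural requirement of a modular family --- monotonicity of each member, the pointwise bounds $(\Phi_{I,-})^p \le (\Phi_x)^p \le (\Phi_{I,+})^p$ on $\overline{I}$, the nesting inequalities for $I \subset J$, and the invariance of the envelopes on passing to the closure --- is inherited from $\pmb{\Phi}$ with no work. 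So the content is entirely in verifying the \sci\ condition.

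For that, fix $B > 0$ and an interval $I$, and let $w \le \big((\Phi_{I,-})^p\big)^{-1}\!\big(B/m(I)\big)$. Using monotonicity and the definition of the (generalized) inverse --- exactly the manipulation in the proof of Lemma~\ref{lemma:scicomp} --- this gives $(\Phi_{I,-}(w))^p \le B/m(I)$, i.e.\ $\Phi_{I,-}(w) \le (B/m(I))^{1/p}$. The key point is then to absorb the exponent $1/p$ on $m(I)$: the underlying space has finite measure, so $m(I) \le L$ for all intervals $I$ with $L$ fixed, and since $p > 1$ we have $m(I)^{1/p} = m(I)\,m(I)^{1/p-1} \ge m(I)\,L^{1/p-1}$, whence
\[
  \Phi_{I,-}(w) \le \frac{B^{1/p}}{m(I)^{1/p}} \le \frac{B^{1/p}L^{1-1/p}}{m(I)}.
\]
Now apply the hypothesis that $\pmb{\Phi}$ is \sci\ with the parameter $\tilde{B} := B^{1/p}L^{1-1/p}$ in place of $B$: there is a constant $\tilde{C}$ (depending only on $\tilde{B}$, hence only on $B$ and $p$) with $\Phi_{I,+}(w) \le \tilde{C}\,\Phi_{I,-}(w)$ for every such $w$. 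Raising to the $p$-th power yields $(\Phi_{I,+}(w))^p \le \tilde{C}^p (\Phi_{I,-}(w))^p$, which is precisely the \sci\ inequality for $\pmb{\Phi}^p$ with constant $C = \tilde{C}^p$.

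The argument is short, and the only point needing any care is the passage from $w \le \big((\Phi_{I,-})^p\big)^{-1}(B/m(I))$ to $\Phi_{I,-}(w) \le (B/m(I))^{1/p}$, since the $\Phi_{I,-}$ are only assumed increasing, not necessarily strictly increasing or surjective; I would handle this exactly as in Lemma~\ref{lemma:scicomp} and Examples~\ref{ex:powersci}--\ref{ex:logsci}. It is worth noting that the result does not follow from Lemma~\ref{lemma:scicomp}: that lemma concerns inner composition $\pmb{\Phi}\circ\Psi$, whereas $\pmb{\Phi}^p$ is the outer composition $(\,\cdot\,)^p\circ\pmb{\Phi}$, and it is precisely outer composition that interacts nontrivially with the threshold $B/m(I)$ --- which is why the finiteness of the measure and the restriction $p>1$ enter the argument.
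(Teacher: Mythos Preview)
Your argument is correct and follows essentially the same route as the paper's proof: reduce $(\Phi_{I,-}(w))^p \le B/m(I)$ to a bound of the form $\Phi_{I,-}(w) \le \tilde B/m(I)$, apply the \sci\ hypothesis for $\pmb{\Phi}$ with the adjusted parameter, and then raise the resulting ratio inequality to the $p$-th power. The paper handles the passage from $(B/m(I))^{1/p}$ to $\tilde B/m(I)$ by simply noting that $(B/m(I))^{1/p} \le B/m(I)$ ``at least for small enough $m(I)$'' (i.e.\ once $B/m(I)\ge 1$), whereas you make this uniform in $I$ via the finite-measure bound $m(I)\le L$; your treatment is a bit more explicit, but the idea is the same.
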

\begin{proof}
  Note that $m(I) \leq 2 \pi$ so
  $m(I) /(2 \pi) \leq 1$ and so
  $m(I)^{1/p} / (2 \pi)^{1/p} \geq m(I)/(2 \pi)$, and so
  $m(I)^{1/p} \leq m(I)/ (2\pi)^{1-1/p}$.
  Therefore
  $1/m(I)^{1/p} \leq (2\pi)^{1-1/p} / m(I)$.  
  If $\Phi(w)^p \leq B/m(I)$ then
\[
  \Phi(w) \leq B^{1/p}/m(I)^{1/p} \leq
  B^{1/p}(2\pi)^{1-1/p} / m(I).
\]
  Also
  \[
    \frac{\Phi_{+,I}(\lambda)^p}{\Phi_{-,I}(\lambda)^p} =
    \left(\frac{\Phi_{+,I}(\lambda)}{\Phi_{-,I}(\lambda)}\right)^p
    \leq C^p
  \]
  where the $C$ in question is the same as the constant in the
  definition of \sci\ for $\Phi$ with 
  $B^{1/p}(2\pi)^{1-1/p}$ in place of $B$.  
\end{proof}

\begin{example}
Suppose that $\Phi_x(t) = t^p w(x)$.  This corresponds to a weighted 
$L^p$ space.  Then the family $\Phi$ is slowly changing on intervals 
if and only if $w(x)$ is bounded above and below away from $0$, i.e.\ 
there are constants $c$ and $C$ such that $0 < c < w(x) < C < \infty$ for all 
$x$.  
\end{example}

As shown by the example above, 
the condition of slowly changing on intervals is sometimes 
too strict to be of interest.
We will thus introduce other conditions later.

We now come to definitions that will be important for stating
more general conditions than \sci. 
\begin{definition}            
Recall that an open set on the unit circle can be decomposed uniquely
into a countable disjoint union of open intervals (i.e. arcs).
We call each of these
intervals basic intervals for the open set.  
\end{definition}

\begin{definition}
For any $\lambda > 0$ and family $\family{\Phi}$, define the measure 
$\mu_{\Phi, \lambda} = \Phi_x(\lambda) \, dx$.  We sometimes use the
notation $\mu_{\lambda}$ if the family $\family{\Phi}$ is clear. 
\end{definition}

Recall that $\fint_I f \, dx$ is the average of the function
$f$ on the interval $I$. 
\begin{definition}
For any interval I, define 
\[
\phi_I (t) = \fint_I \phi_x(t) \, dt.
\]
\end{definition}

We now come to a technical lemma we will use in the future.
\begin{lemma}\label{lemma:geqtoleq}
  Let $\family{\Phi}$ be a modular family.  
Suppose that $\mathcal{F}$ is a family of pairs of functions 
such that if $(f,g) \in \mathcal{F}$ then 
$(\alpha f, \alpha g) \in \mathcal{F}$ for any $\alpha > 0$.  
Also suppose that for some 
$A>0$ there is a $B>0$ depending $A$ such that 
if $\rho(f) = A$ then $\rho(g) \geq B$.  
It follows that if $\rho(g) \leq B$ then either
$\rho(f) \leq A$ or $\rho(f) = \infty$.  It also follows that 
if $\rho(f)$ is finite, then $\|f\|_{(A)} \leq \|g\|_{(B)}$.  
\end{lemma}

\begin{proof}
Suppose that $A \leq \rho(f) < \infty$.
As we said above, there is a 
$c \geq 1$ such that $\rho(f/c) = A$.  Then 
$\rho(g/c) \geq B$, and so $\rho(g) \geq B$ since 
$c \geq 1$.  It follows that if $\rho(g) < B$ then 
$\rho(f) < A$ or $\rho(f) = \infty$.  Now, if 
$\rho(g) = B$ and $\rho(f)$ is finite, 
then $\rho(g/c) < B$ for all $c > 1$, 
and so $\rho(f/c) < A$ since $\rho(f/c) < \infty$.  
Letting $c \rightarrow 1$ 
and applying the dominated convergence theorem 
shows that $\rho(f) \leq A$.   

For the last part, suppose that $\|g\|_{(B)} = \lambda$.  
Then $\rho(g/\lambda) = B$ and so 
$\rho(f/\lambda) \leq A$, and so $\|f\|_{(A)} \leq \lambda$. 
\end{proof}


\begin{definition} We say that a modular family $\family{\Psi}$ is
  $L^1$ dominating if for every $A > 0$, there is an $A' > 0$
  such that if $\rho_{\Psi}(f) \leq A$ then $\|f\|_{L^1} \leq A'$.
\end{definition}

For example, a family is $L^1$ dominating if
there is a constant $K>0$ such that 
$\Psi_x(t) \geq Kt^b$ for some $b \geq 1$ and all $t$.  It is also
$L^1$ dominating if there are constants $K > 0$ and $b, c \geq 1$ such that
$\Psi_x(t) \geq Kt^b$ for $ 0 \leq t \leq 1$ and
$\Psi_x(t) \geq Kt^c$ for $t \geq 1$.

\section{Hardy Spaces}
Suppose that $f$ is analytic in $\mathbb{D}$, the unit disc.
Let $\family{\Phi}$ be a modular family. 
Let
\[
  \rho_{\Phi}(r,f) = \rho_{\Phi}(f(r e^{i\cdot})).
\]
Also define 
\[
  \rho_{H_{\Phi}}(f) = \sup_{0<r<1} \rho_{\Phi}(r,f).
\]
We may also define
\[
  M_{\Phi,(a)}(r,f) = \|f(r e^{i\cdot})\|_{\Phi, (a)}
\]
and similarly we may define
\[
  \|f\|_{H_{\Phi}, (a)} = \sup_{0<r<1} M_{\Phi,(a)}(r,f).
\]
We must be very careful, since a bound on $\rho(g)$ for a
family of functions $g$ does not imply a bound on
$\|g\|_{(C)}$.  Thus, it seems possible that
$\|f\|_{H_{\Phi}, (a)} = \infty$ for some $f$ such that
$\rho_{H_{\Phi}}(f) < \infty$.  (It is an open question, 
as far as we know,
to find an example of such a function, or determine
that such a function cannot exist).  However, we can say that
if $\rho_{H_{\Phi}}(f) = C$ then $\|f\|_{H_{\Phi},(C)} = 1$. 

Let $H_{\Phi}$ consist of all analytic $f$ defined in 
$\mathbb{D}$ such that $\rho_{H_\Phi}(f) < \infty$. 
We may similarly define $H_{\Phi,(a)}$ to consist of all 
analytic $f$ defined in $\mathbb{D}$ such that 
$\|f\|_{H_{\Phi},(a)} < \infty$.  As we have said above, it is 
an open question as far as we know whether these spaces coincide. 
In this paper we deal with the spaces $H_{\Phi}$. 
We may define similar $h_{\Phi}$ spaces for harmonic functions.

\section{Uneven Layer Cake Decomposition}

We first introduce a condition on modular families that we will need 
below.  
\begin{definition}
Suppose that for each $B > 0$, we can find constants 
$1 < C_1 < C_2 < \infty$ 
with the following property:
for any open interval $I$ and any $\lambda > 0$ such that 
$\mu_{\lambda}(I) \leq B$, 
we can find a 
$\lambda' > \lambda$ such that, for each $x \in I$, we have that 
\begin{equation}\label{eq:layerlambdaone}
C_1 \Phi_x (\lambda) \leq \Phi_x(\lambda') \leq C_2 \Phi_x(\lambda).
\end{equation}
Then we say $\Phi$ has the good ratio property. 
\end{definition}
Now suppose that
$\Phi$ has the good ratio property and that 
$\Psi$ is increasing and continuous and consider the family 
$\Phi \circ \Psi$.  Then 
$\mu_{\Phi, \Psi(\lambda)} = \mu_{\Phi \circ \Psi, \lambda}$. 
So if $\mu_{\Phi \circ \Psi, \lambda}(I) \leq B$ then 
$\mu_{\Phi, \Psi( \lambda)}(I) \leq B$.  Thus we can find a 
$\nu > \Psi(\lambda)$ such that 
\[
C_1 \Phi_x (\Psi(\lambda)) \leq \Phi_x(\nu) \leq C_2 
\Phi_x(\Psi(\lambda)).
\]
Then if we let $\lambda' = \Psi^{-1}(\nu)$, we have 
\[
C_1 \Phi_x (\Psi(\lambda)) \leq \Phi_x(\Psi(\lambda')) \leq C_2 
\Phi_x(\Psi(\lambda)).
\]
Thus $\Phi \circ \Psi$ has the good ratio property. 

Note that any basic interval of a level set $\{x: f(x) > \lambda\}$ of 
a nonnegative lower semicontinuous function $f$ with 
$\rho_{\Phi}(f) \leq B$ will satisfy 
the condition $\mu_{\lambda}(I) \leq B$.  


We now discuss the uneven layer cake decomposition. 
Let $f$ be a nonnegative function that is lower semicontinuous and 
on the unit circle, and suppose 
$\rho(f) \leq B$.
Recall that, for any 
$\lambda$, the set $\{x : f(x) > \lambda\}$ is open, and thus is the disjoint 
union of open intervals, which we will call basic intervals for the set.  
Now suppose $\Phi$ has the good ratio property.
Let $I$, $\lambda$, and 
$\lambda'$ be as in the definition of the good ratio property.
Let $I'$ be a basic interval in 
$\{x : f(x) > \lambda'\} \cap I$.  Note that $I'$ is also a basic interval in 
$\{x : f(x) > \lambda'\}$.  We will say that the interval $I'$ with height $\lambda'$ 
is a child of the interval $I$ with height $\lambda$.  It is possible that 
$I' = I$, so that an interval can be its own child, but with different heights.  

Now, consider the set $\{x: f(x) > 1\}$.  Find the children of each basic interval
$I$ in this set by choosing some $\lambda'$ for each $I$ so that equation 
\eqref{eq:layerlambdaone} holds with $\lambda = 1$.
Find children of each of these 
children in a similar manner.  Continue by induction.  Then we have a tree like 
structure.  Call it $\mathcal{T}$.
We will consider members of $\mathcal{T}$ to 
be intervals $I$ with associated heights $\lambda_I$.
Note that one interval may 
be in $\mathcal{T}$ with multiple heights.  

For each $x$, it will belong to intervals
$I_1(x)$, $I_2(x)$, $I_3(x)$, $\ldots$
where each $I_{j+1}(x)$ is the child of $I_j(x)$.  
Let $\lambda_j(x)$ be the height associated with the interval $I_j(x)$.  
Notice that
for almost every $x$, 
there is some $n$ so that 
$x$ is not in any child of $I_n(x)$. For suppose this is not the
case for some particular $x$.  Then 
$\Phi_x(\lambda_j(x)) \rightarrow \infty$ as
$j \rightarrow \infty$, which 
means that $\lambda_j(x) \rightarrow \infty$.
But if $x$ were in all the $I_j$, then
$f(x) > \lambda_j(x)$ for all $j$, and so $f(x)$ would be infinite.  
Define $\lv(x) = n$.  Notice that
\[
  \Phi_x(\lambda_{\lv x}) \leq \Phi_x(f(x)) \leq
  \Phi_{x}(\lambda_{\lv(x) + 1}) \leq C_2 \Phi_x(\lambda_{\lv x}).
\]
Let $\rc(I)$ denote $I$ with all of its children removed. 

Now notice that $\Phi_x(\lambda_j) \leq C_1^{-(n-j)} \Phi_x(\lambda_n)$, so that 
\[
\Phi_x(\lambda_n) \leq \sum_{j=1}^n \Phi_x(\lambda_j) \leq 
\frac{C_1}{C_1-1} \Phi_x(\lambda_n)
\]
by the formula for the sum of an infinite
geometric series with ratio $1/C_1$.

Integrating the right hand side of the above inequality gives 
\[
\begin{split}
\sum_{I \in \mathcal{T}} \mu_{\lambda_I}(I) &\leq
\frac{C_1}{C_1-1} \sum_{I \in \mathcal{T}} \mu_{\lambda_I}(\rc (I)) \\
&\leq
\frac{C_1}{C_1-1} \int_0^{2\pi} \Phi_x(\lambda_{\lv(x)}(x)) \, dx \\
&\leq 
\frac{C_1}{C_1-1} \int_0^{2\pi} \Phi_x(f(x)) \, dx.
\end{split}
\]
Note that $\lambda_{\lv(x)}$ is defined for almost every $x$, so the
inequality above makes sense. 

Also 
\[
\begin{split}
 \int_0^{2\pi} \Phi_x(f(x)) \, dx &\leq
\int_0^{2\pi} \Phi_x(\lambda_{\lv(x)+1}(x)) \, dx 
\leq
C_2 \int_0^{2\pi} \Phi_x(\lambda_{\lv(x)}(x)) \, dx 
\\
&=
C_2 \sum_{I \in \mathcal{T}} \mu_{\lambda_I}(\rc(I)) 
\leq
C_2 \sum_{I \in \mathcal{T}} \mu_{\lambda_I}(I).
\end{split}
\]

And thus we have the following theorem, which we call the modular inequality 
for the uneven layer cake decomposition:
\begin{theorem}
If the family $\Phi$ has the good ratio property, then 
for every function $f$ with $\rho(f) \leq B$ we have that 
\[
\frac{C_1-1}{C_1} \sum_{I \in \mathcal{T}} \mu_{\lambda_I}(I) \leq 
\int_0^{2\pi} \Phi_x(f(x)) \, dx 
\leq 
C_2 \sum_{I \in \mathcal{T}} \mu_{\lambda_I}(I),
\]
where the constants $C_1$ and $C_2$ are as in the definition of 
the good ratio property.
\end{theorem}

Notice that to use the uneven layer cake decomposition, we 
first assume that $\rho(f) \leq B$ for some $B$, and that the 
constants $C_1$ and $C_2$ depend on $B$.  Thus, it may seem 
that this decomposition is not useful for bounding 
modulars of functions, since we have to assume the modular 
is bounded in order to use it!

There are two ways around this.   
First of all, if the constants $C_1$ and $C_2$ are independent of 
$B$, then the 
modular inequality for the uneven layer cake decomposition 
holds for any $f$ with finite modular, and we may
use the same numbers $\lambda_1, \lambda_2, \ldots$ for 
each point $x$.  Thus, the decomposition can be made in to an 
``even'' layer cake decomposition, where the numbers 
$\lambda_j$ are the same no matter what the function
$f$ is.  The finiteness assumption on 
the modular of $f$ may be removed by the monotone 
convergence theorem.
These above remarks apply, for example, 
if the space in question is a 
weighted variable $L^{p(\cdot)}$ space where 
$0 < p_{-} < p_{+} < \infty$.

The second way around the problem with the 
uneven layer cake decomposition is more complicated, 
but can be used if 
the numbers $\lambda_1(x), \lambda_2(x), \ldots$ cannot 
be chosen independently of $x$.  First, as we observe in 
equation \eqref{eq:layercakeotherfunction}, if we have 
another function $g$, then a modified form of the left hand 
side of the modular inequality holds for $g$, but still 
using the uneven layer cake decomposition of $f$.  
See below for details.  If we have two functions $f$ and 
$g$ related in certain ways (for example, by a 
good-$\lambda$ inequality), we can sometimes use this observation 
to prove that for every $A >0$, there is a $B > 0$ such that 
if $\rho(f) = A$ then 
$\rho(g) \geq B$.  By Lemma \ref{lemma:geqtoleq}, this often 
implies that if $\rho(g) \leq B$ then $\rho(f) \leq A$, if 
$\rho(f)$ is finite.  This finiteness assumption can often 
be removed by an approximation argument. 

\begin{example}
Suppose that $\Phi_x(t) = t^{p(x)} w(x)$, so that the space is a weighted 
variable Lebesgue space.  If $p_{-} = \essinf p(x)$ and $p_{+} = \esssup p(x)$ 
have the property that 
\[
0 < p_{-} < p_{+} < \infty\]
 then we may take 
$C_1 = 2^{p_{-}}$ and $C_2 = 2^{p_{+}}$ and we can choose 
$\lambda_n = 2^{n-1}$ (irrespective of $x$).
\end{example}

As we see in the above example, in many important cases
we can choose the numbers $\lambda_j$ independently of $x$.
This case is much simpler and makes the decomposition below
simpler to apply.  However, this does not work in the case of
variable exponent spaces where $p_{-} = 0$, or with spaces
as in Example \ref{ex:logsci}. 

\begin{example}
If the family $\Phi$ has the good ratio property, then so 
does the family $\Psi$ given by $\Psi_x(t) = \Phi_x(t) w(x)$ for any function 
$w(x) > 0$.  For the family $\Psi$, 
we can choose the same constants $C_1$ and $C_2$ and the same 
$\lambda'$ for each interval $I$ with height $\lambda$ as we did for 
$\Phi$.  
\end{example}

\begin{example}
If a family $\Phi$ is slowly changing on intervals, then it has 
the good ratio property.
Given a constant $B > 0$, let $I$ be such that 
$\mu_\lambda(I) \leq B$.  
This means that $\Phi_{I,-}(\lambda) \leq B/m(I)$, and 
so $\lambda \leq \Phi_{I,-}^{-1}(B/m(I))$.  
Thus $\Phi_{I,+}(\lambda) \leq C \Phi_{I,-}(\lambda)$ for 
the constant $C$ in the definition of slowly changing on 
intervals.  Now choose $\lambda'$ so that 
$\Phi_{I,-}(\lambda') = A \Phi_{I,-}(\lambda)$ for some 
$A > C$.  It follows 
for each $x \in I$ that 
\[
\Phi_x(\lambda') \geq \Phi_{I,-}(\lambda') = A \Phi_{I,-}(\lambda) 
\geq (A/C) \Phi_{I,+}(\lambda) \geq (A/C) \Phi_x(\lambda)
\]
and 
\[
\Phi_x(\lambda') \leq \Phi_{I,+}(\lambda') \leq 
C \Phi_{I,-}(\lambda') \leq A C \Phi_{I,-}(\lambda) 
\leq A C \Phi_x(\lambda).
\]
\end{example}

We now make two comments.  First of all, suppose that $h$ is another 
lower semicontinuous 
nonnegative function, 
and that the inequality $h(x) \leq f(x)$ holds pointwise.  
Note that $\{x: h(x) > \lambda\} \subset \{x: f(x) > \lambda\}$.  
Then we may 
form an uneven layer cake decomposition of $h$ using the same numbers 
$\lambda_n(x)$ for each point $x$, where each interval 
in the decomposition for $h$ is a subset of an interval in the 
decomposition for $f$ with the same associated height $\lambda$.

Secondly, suppose that $g$ is another function.  
For each x, define $\lv_g(x)$ to be the largest $j$ such that 
$0 \leq j \leq \lv(x)$ and 
$\lambda_j(x) \leq g(x)$.  
Then the sum 
\begin{equation}\label{eq:layercakeotherfunction}
\begin{split}
\sum_{I \in \mathcal{T}} \mu_{\lambda_I}(I \cap \{x: g(x) > \lambda_I\})
&=
\int_0^{2\pi} \sum_{n=1}^{\lv_g(x)} \Phi_x(\lambda_n) \, dx \\
&\leq 
\frac{C_1}{C_1-1} \int_0^{2\pi} \Phi_x(\lambda_{\lv_{g}(x)}) \, dx \\
&\leq 
\frac{C_1}{C_1-1} \int_0^{2\pi} \Phi_x(g(x)) \, dx.
\end{split}
\end{equation}

\begin{definition}
We will say that the family $\Phi$ has the $A_\infty$ property if 
for each $B > 0$
there are constants $\epsilon$ and $\delta$ such that 
$0 < \epsilon, \delta < 1$ and for which the following holds: 
Let $I$ be any interval such that 
$\mu_{\lambda}(I) \leq B$ and 
let $E \subset I$ such that $m(E) < \delta m(I)$.  Then 
$\mu_\lambda(E) < \epsilon \mu_\lambda(I)$ for each 
$\lambda$. 
\end{definition}
In other words, this definition says that, when restricted to 
small enough intervals (depending on $\lambda$), each 
measure $\mu_{\lambda}$ is an $A_\infty$ measure, with 
uniform constants.

It is important here that we restrict to small intervals
when considering large $\lambda$.  For example, in a variable
exponent Lebesgue space, the measures $\mu_\lambda$ cannot all
be $A_\infty$ on the whole circle unless the exponent is constant
almost everywhere. 

Suppose that $\Psi$ is an increasing function and 
$\Phi$ is $A_\infty$.  Suppose that 
$\mu_{\Phi \circ \Psi, \lambda}(I) \leq B$.  This means that 
$\mu_{\Phi, \Psi(\lambda)} \leq B$.  
Thus there are constants $\epsilon$ and $\delta$ such that 
$0 < \epsilon, \delta < 1$ and if 
$E \subset I$ and $m(E) < \delta m(I)$ then 
$\mu_{\Phi, \Psi(\lambda)}(E) < \epsilon \mu_{\Phi, \Psi(\lambda)}(I)$.
But this implies that 
$\mu_{\Phi \circ \Psi, \lambda}(E) 
< \epsilon \mu_{\Phi \circ \Psi, \lambda}(I)$.
Thus, $\Phi \circ \Psi$ also has the $A_\infty$ property.

By basic facts about $A_\infty$ measures, for every $\epsilon$ we 
can find a $\delta$ that makes the $A_\infty$ property hold for intervals 
$I$ such that $\mu_\lambda(I) \leq B$.  
Also, there is some $p$ such that, when restricted to 
such intervals, each measure $\mu_\lambda$ is 
in $A^p$ for some $p$, and the $A^p$ norms are uniformly bounded.
Also, applying the $A_\infty$ condition to
$E^c$ shows that
if $E \subset I$ and $m(E) \geq (1 - \delta) m(I)$, then
$\mu_\lambda(I) \geq (1 - \epsilon) m(I)$. 

\begin{example}
Suppose that $\Phi$ is slowly changing on intervals.  
Suppose that $\mu_\lambda(I) \leq B$ for some 
$B > 0$.  Then note that 
\[
\Phi_{I,-}(\lambda) \leq \Phi_x(\lambda) \leq \Phi_{I,+}(\lambda) 
\leq C \Phi_{I,-}(\lambda),
\]
where $C$ is as in the definition of slowly changing on intervals.
This shows that 
\[
\Phi_{I,-}(\lambda) m(A) \leq \mu_\lambda(A) 
\leq C \Phi_{I,-}(\lambda) m(A)
\]
for any $A \subset I$, where $m(A)$ is Lebesgue measure.  
But this means that if $m(A) \leq \delta m(I)$ then 
\[
\mu_{\lambda}(A) \leq \frac{C \delta}{1 + (C-1)\delta}.
\]
Thus $\Phi$ has the $A_\infty$ property. 
\end{example}

We now prove the following theorem:
\begin{theorem} \label{thm:twofunctionmeasureinequality} 
Let $f$ and $g$ be nonnegative functions.  
Suppose that there is a constant $0 < \alpha < 1$ such that 
for every $\lambda > 0$ we have that 
\[
m( \{x: g(x) > \lambda\} \cap I) > \alpha m( I)
\]
for each interval $I$ that is a fundamental interval in 
$\{x: f(x) > \lambda\}$.  Assume
the family $\Phi$ has goood ratio property and the 
$A_\infty$ property.  
Then for each $A$, there is a $B$ such that if 
$\rho(f) = A$ then $\rho(g) \geq B$. 
\end{theorem}

\begin{proof}
 Let $\rho(f) = A$.
 Let $\delta = 1 - \alpha$ and let $\epsilon$ be such that if
 $m(E) < \delta m(I)$ then $\mu_{\lambda}(E) < \epsilon \mu_\lambda(I)$
 for any $E \subset I$, where $I$ is any interval with
 $\mu_{\lambda}(I) < A$.  
 Form the uneven layer cake decomposition of $f$. 
 By the modular inequality for the uneven layer cake decomposition, 
 we have that 
\[
 \int_0^{2\pi} \Phi_x(f(x)) \, dx 
\leq 
C_2 \sum_{I \in \mathcal{T}} \mu_{\lambda_I}(I) 
\leq
C_2 (1 - \epsilon)
  \sum_{I \in \mathcal{T}} \mu_{\lambda_I}(I \cap \{x:g(x) > \lambda\}). 
\]
By inequality \eqref{eq:layercakeotherfunction}, we have 
\[
 \int_0^{2\pi} \Phi_x(f(x)) \, dx 
 \leq
C_2 (1 - \epsilon) \frac{C_1}{C_1-1} \int_0^{2\pi} \Phi_x(g(x)) \, dx.
\]
 \end{proof}

Now suppose that we have three functions, $f$, $g$, and $h$, and that $f$ 
and $h$ are lower semicontinous and $h(x) \leq f(x)$ pointwise.
Also assume that 
$\rho(f) = A > 0$.  Let 
$\mathcal{T}$ be the collection of intervals (with associated levels) in the 
uneven layer cake decomposition of $f$.  Suppose that there is a fixed 
$\delta < 1$ such that, for each 
$I \in \mathcal{T}$, we have that 
\[
m(\{ x \in I : h(x) > \lambda \text{ and } g(x) < \lambda \}) < \delta m(I).
\]
This implies, by the $A_\infty$ condition, that for each $I \in \mathcal{T}$ 
we have that 
\[
\mu_{\lambda_I} (\{ x \in I : h(x) > \lambda \text{ and } g(x) < \lambda \}) < 
       \epsilon \mu_{\lambda_I}(I).
\]
Note that $\epsilon$ depends on $A$.  
Now notice that 
\[
\mu(\{ x \in I : h(x) > \lambda\}) \leq 
\mu(\{ x \in I : h(x) > \lambda \text{ and } g(x) < \lambda \}) + 
\mu(\{x \in I : g(x) > \lambda\}).
\]
for any measure $\mu$. 

Thus 
\[
\begin{split}
\int_0^{2\pi} \Phi_x(h(x)) \, dx &\leq 
C_2 \sum_{I \in \mathcal{T}} \mu_{\lambda_I}(I \cap \{x: h(x) > \lambda\}) \\
&\leq 
C_2 \sum_{I \in \mathcal{T}}  \big[
\mu_{\lambda_I}(\{ x \in I : h(x) > \lambda \text{ and } g(x) < \lambda \}) + \\
& \qquad \mu_{\lambda_I}(\{x \in I : g(x) > \lambda\}) \big]
\\
&\leq C_2 \sum_{I \in \mathcal{T}} \big[ \epsilon \mu_{\lambda_I}(I) +
  \mu_{\lambda_I}(\{x : g(x) > \lambda\} \cap I) \big] 
\\
&\leq
\epsilon C_2   \frac{C_1}{C_1-1} \int_0^{2\pi} \Phi_x(f(x)) \, dx + 
C_2 \frac{C_1}{C_1 - 1} \int_0^{2\pi} \Phi_x(g(x)) \, dx.
\end{split}
\]
Remember here that $C_1$ and $C_2$ depend on $A$.  

We now consider the case when the inequality above is
\[
m(\{ x \in I : f(x) > \beta \lambda \text{ and } g(x) < \gamma \lambda \}) < \delta m(I),
\]
for each basic interval $I$, where $\beta > 1$.  In other words 
\[
m(\{ x \in I : f(x)/\beta  > \ \lambda \text{ and } g(x)/\gamma < \lambda \}) < \delta m(I).
\]
We make the assumption that 
there is a family $\mathcal{F}$ of pairs of functions 
such that if $(f,g) \in \mathcal{F}$ then 
$(\alpha f, \alpha g) \in \mathcal{F}$ for any $\alpha > 0$.  

This then implies that 
\[
\int_0^{2\pi} \Phi_x(f(x)/\beta) \, dx \leq
\epsilon C_2   \frac{C_1}{C_1-1} \int_0^{2\pi} \Phi_x(f(x)) \, dx + 
C_2 \frac{C_1}{C_1 - 1} \int_0^{2\pi} \Phi_x(g(x)/\gamma) \, dx.
\]

Now, if $\Phi$ is uniformly doubling, then there exists a constant 
$D_\beta$ depending on $\beta$ such that
$\Phi_x(a) < D_\beta \Phi_x(a / \beta)$ for any $a$.
Define $D_{1/\gamma}$ similarly.  Then we have that 
\[
\int_0^{2\pi} \Phi_x(f(x)) \, dx \leq 
\frac{\epsilon D_\beta C_1 C_2}{C_1 - 1} \int_0^{2\pi} \Phi_x(f(x)) \, dx + 
\int_0^{2\pi}
\frac{ D_{1/\gamma} D_\beta C_1 C_2}{C_1 - 1} \Phi_x(g(x)) \, dx.
\]

Now, suppose that for some fixed $\beta$, 
we can make $\delta$ as small as we wish by choosing $\gamma$ very small.  
Also recall that we can make $\epsilon$ as small as we wish by making 
$\delta$ very small. Then we have that 
\[
\frac{C_1-1}{ D_{1/\gamma} D_\beta C_1 C_2}
 \left[ 1 - \frac{\epsilon D_\beta C_1 C_2}{C_1 - 1}\right] 
 \rho(f) \leq \rho(g).
 \]
 Call the left side of the above inequality $B$.  
The constant
\[
\frac{C_1-1}{ D_{1/\gamma} D_\beta C_1 C_2}
 \left[ 1 - \frac{\epsilon D_\beta C_1 C_2}{C_1 - 1}\right] 
 \]
does depend on $A$, and we can make it positive by making 
$\epsilon$ as small as we wish.  
 
 This means that if $\rho(f) = A$ then $\rho(g) \geq B$.  
By Lemma \ref{lemma:geqtoleq}, we have the following theorem.

\begin{theorem}\label{thm:goodlambda}
Suppose that the family $\Phi$ is uniformly doubling and has the 
$A_\infty$ property. 
Suppose that $\mathcal{F}$ is a family of pairs of functions 
such that if $(f,g) \in \mathcal{F}$ then 
$(\alpha f, \alpha g) \in \mathcal{F}$ for any $\alpha > 0$, and that 
the following good lambda inequality holds between any 
$f$ and $g$ such that $(f,g) \in \mathcal{F}$:
There is a $\beta > 0$ such that 
for each sufficiently small $\delta > 0$ there exists a $\gamma > 0$
such that 
for each $\lambda > 0$ and  basic interval $I$ in the level set of 
$\{x: f(x) > \lambda\}$ one has
\[
  m(\{ x \in I : f(x) > \beta \lambda \text{ and }
    g(x) < \gamma \lambda \}) < \delta m(I).
\]
Then for each $A > 0$, there is a $B > 0$ such that if
$\rho_\Phi(f)$ is finite then  
$\|f\|_{(A)}\leq \|g\|_{(B)}$.  Also if 
$\rho(g) \leq B$ then $\rho(f) \leq A$ or $\rho(f) = \infty$.  
\end{theorem}

We briefly mention how to do an integral type of layer cake 
decomposition, and how to use it to prove a theorem similar to the above.  
Suppose for simplicity that each $\Phi_x'$ is $C^1$ for each $x$.  

By Fubini's theorem, we have that 
\[
\int_0^{2\pi} \Phi_x(f(x)) \, dx = 
\int_0^\infty \mu'_\lambda (\left\{x: f(x) > \lambda\right\}) \, d \lambda
\]
where $\mu'_\lambda = \Phi'_x(\lambda) \, dx$. 

The advantage of this layer cake decomposition over the other is that 
it is an equality, and that there are no constants $C_1$ and $C_2$ 
that we must control, depending on 
the modular of $f$.  
The disadvantage is that it 
involves derivatives of $\Phi_x$ instead of the functions $\Phi_x$ themselves.  

We could  say that the family $\Phi$ has the 
$A'_\infty$ property if for 
each $B> 0$, the measures $\mu'_{\lambda}$ are uniformly 
$A_\infty$ when restrited to intervals
$I$ such that $\mu_{\lambda}(I) < B$.  
If $\Phi$ has this property, we can prove a theorem 
about good-$\lambda$ inequalities similar to Theorem \ref{thm:goodlambda}, 
but using the integral layer cake decomposition instead.  Note however that 
if $\Phi$ has the $A'_\infty$ property than 
Fubini's theorem shows that it has 
the $A_\infty$ property.

\section{An Application to Harmonic Functions}
It is known that  
the assumptions of Theorem \ref{thm:goodlambda} are 
satisfied if we take $f$ equal to the Lusin area integral of a harmonic 
function and $g$ equal to the nontangential maximal function of a harmonic 
function.  It also holds with the places of the area
integral and the nontangential 
maximal function reversed
\cite{Fefferman-Gundy-Silverstein-Stein_good-lambda}.

A similar argument to the proof of Theorem 3 in
\cite{Fefferman-Gundy-Silverstein-Stein_good-lambda}
(where instead of Green's theorem we use
Cauchy's integral theorem from complex analysis) shows that
a good-$\lambda$ inequality of the type required holds between the
nontangential maximal function of a harmonic function and
the nontangential maximal function of its harmonic conjugate. 
Thus, Theorem \ref{thm:goodlambda} applies in all of these
cases.  

In this context, we do not need the assumption of finite
norm in Theorem \ref{thm:goodlambda}.  To see this, let 
$u$ be harmonic in the unit disc and 
let $\alpha(z) = u(z) + i v(z)$ be its analytic completion.
Let $\alpha_r$ be the dilation of
$\alpha$ for $0 < r < 1$.  Then $u_r^* \leq C u^*$ and
certainly $v_r^*$ is in $L_\Phi$ since $v_r^*$ 
is bounded.  Also, the Lusin area 
integral $A u_r$ is uniformly continuous and
$A u_r \leq C A u$ for $r$ close to $1$, where $C$ is 
a constant. 
Thus for each $C>0$, there is a $B > 0$ such that 
$\|A u_r\|_{(C)} \leq \|u_r^*\|_{(B)}$.
Letting $r \rightarrow 1$ gives a result for 
$A u$ and $u^*$.  A similar result holds for 
$u^*$ and $v^*$.  

A technicality we have not mentioned is that 
we may have to choose different apertures of cones for these
good-$\lambda$ inequalities to hold. However, this does not matter.
To see this, let $u^*_a$ and $u^*_b$ be maximal functions of $u$
using cones of apertures $a$ and $b$ respectively, where $b > a$.
Then every basic interval $I$ in a level set of $u^*_b$ contains basic
intervals $J_k$ in a corresponding level set of $u^*_a$, and the total
measure of all the $J_k$ is greater than some (possibly small) constant
times the measure of $I$.  This can be proven 
using
an argument similar to that in 
Section II.2.5 in \cite{Stein_harmonicanalysis}, or using the 
Vitali covering lemma.  In the latter argument, one applies the 
Vitali covering 
lemma to the collection of intervals
$\{T_b(z) \cap \mathbb{D} : u(z) > \lambda\}$, where 
$T_b(z)$ is the tent with peak at $z$.  Note that 
the length of the base of $T_a(z)$ is greater than some constant 
times the length of the base of $T_b(z)$. 

Because 
every basic interval $I$ in a level set of $u^*_b$ contains basic
intervals $J_k$ in a corresponding level set of $u^*_a$, and the total
measure of all the $J_k$ is greater than some (possibly small) constant
times the measure of $I$, 
we have by Theorem \ref{thm:twofunctionmeasureinequality} 
that for each 
$A > 0$ such that $\rho(u^*_b) = A$, there is a $B > 0$
such that $\rho(u^*_a) \geq B$.  
But this implies, by Lemma \ref{lemma:geqtoleq}, that 
for every $A > 0$, there is a $B > 0$ such that 
$\|u^*_b\|_{(A)} \leq \|u^*_a\|_{(B)}$.
The requirement from the lemma that the left 
side of the last inequality be finite can be removed 
by a dilation argument as above.  

%


We may thus state the following theorem.  
\begin{theorem}
Suppose that $f = u + iv$ is analytic in the unit disc, and let the 
modular family $\family{\Phi}$ be \udoub\ and
have the $A_\infty$ property and the good ratio property.  Then the
following are equivalent:
\begin{enumerate}
\item $u^* \in L_{\phi}$
\item $v^* \in L_{\phi}$
\item $Au \in L_{\phi}$
\end{enumerate}
Furthermore, we have that for every $C > 0$, there is a 
$B > 0$ such that  
\[ 
\|f\|_{(C)} \leq \|g\|_{(B)}
\]
where $f$ and $g$ can be any of the functions
$u^*$, $v^*$, or $Au$.  
\end{theorem}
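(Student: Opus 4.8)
The plan is to derive the theorem from Theorem~\ref{thm:goodlambda} together with the good-$\lambda$ inequalities already recalled, discharging afterward the two auxiliary hypotheses of that theorem (finiteness of the norm and a single fixed cone aperture) by the dilation and covering arguments sketched above. First I would collect good-$\lambda$ inequalities for all six ordered pairs drawn from $\{u^*, v^*, Au\}$. By \cite{Fefferman-Gundy-Silverstein-Stein_good-lambda}, on every basic interval of every level set an inequality of the form~\eqref{eq:goodlambdadef} holds from $Au$ to $u^*$ and from $u^*$ to $Au$; the Cauchy-integral-theorem analogue of Theorem~3 of \cite{Fefferman-Gundy-Silverstein-Stein_good-lambda} gives such an inequality from $u^*$ to $v^*$ and from $v^*$ to $u^*$; and since conjugate harmonic functions satisfy $|\nabla u| = |\nabla v|$ we have $Av = Au$, so applying the first pair of inequalities to $v$ in place of $u$ also covers the pairs involving $v^*$ and $Au$ (after possibly enlarging a cone aperture). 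In each case the parameters $\beta,\gamma,\delta$ can be chosen so that the smallness condition $2\delta C(2\beta)^{2\log_2 C'} < 1$ of Theorem~\ref{thm:goodlambda} holds, since, as noted just after that theorem, it is enough that $\delta$ can be made as small as we wish by shrinking $\gamma$, which is the form in which the classical inequalities come. The three functions also have the regularity required to serve as the function $f$ in the construction of Section~\ref{section:goodlambda}.

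Next I would apply Theorem~\ref{thm:goodlambda} to each ordered pair $(F,G)$, obtaining a constant $K$ with $\|F\| \le \|G\|_{(1/K)}$ whenever $\|F\|$ is finite. To remove the finiteness proviso, replace $f = u+iv$ by its dilation $f_r$ for $0<r<1$: then $u_r^* \le C u^*$ and $Au_r \le C Au$ with $C$ independent of $r$, while $u_r^*$, $v_r^*$, and $Au_r$ are uniformly continuous, hence bounded, hence in $L_\Phi$, so Theorem~\ref{thm:goodlambda} applies to $(F_r,G_r)$ with a constant $K$ that does not depend on $r$ (it is assembled only from $\beta,\gamma,\delta,C,C'$, none of which change under dilation). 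Letting $r\to 1$, and using the \udoub\ hypothesis to move between the modular and the norms $\|\cdot\|$ and $\|\cdot\|_{(1/K)}$, yields $\|F\| \le \|G\|_{(1/K)}$ with no finiteness assumption. The aperture mismatch in the good-$\lambda$ inequalities is harmless: by the Vitali-covering argument already described (or by definition~(1) of Theorem~\ref{thm:hardydefs}) the norms of maximal functions taken with different apertures are comparable, so one may fix a single aperture everywhere and absorb the resulting constant into $K$.

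Finally I would assemble the statement. Under the \udoub\ hypothesis the conditions $F \in L_\Phi$, $\rho_\Phi(F) < \infty$, and $\|F\|_{(1/K)} < \infty$ are mutually equivalent, so the bounds $\|F\| \le \|G\|_{(1/K)}$ for the six ordered pairs show at once that $\|u^*\|$, $\|v^*\|$, and $\|Au\|$ are finite or infinite together, which is the equivalence of (1)--(3). Taking $K$ to be the largest of the finitely many constants produced (so that $1/K$ is smallest, hence $\|\cdot\|_{(1/K)}$ largest) then makes the displayed inequality $\|f\| \le \|g\|_{(1/K)}$ hold simultaneously for every choice of $f, g$ among $u^*, v^*, Au$.

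The step I expect to be the main obstacle is the limit $r \to 1$: one must verify that the constant issued by Theorem~\ref{thm:goodlambda} really is independent of $r$, and that the passage to the limit is carried out first for the modular and only then transferred to the modified norm $\|\cdot\|_{(1/K)}$ via \udoub, since without doubling there is no control of $\|\cdot\|_{(1/K)}$ by the modular.
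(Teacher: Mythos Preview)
Your proposal is correct and follows essentially the same route as the paper: invoke the classical good-$\lambda$ inequalities from \cite{Fefferman-Gundy-Silverstein-Stein_good-lambda} and the Cauchy-integral-theorem variant, feed them into Theorem~\ref{thm:goodlambda}, remove the finiteness hypothesis by the dilation argument, and dispose of the cone-aperture mismatch by the Vitali covering argument. Your explicit observation that $Au=Av$ (from $|\nabla u|=|\nabla v|$) is a clean way to obtain the $(v^*,Au)$ pairs directly; the paper instead remarks after the theorem that the $u^*$--$v^*$ bound can alternatively be obtained by chaining through $Au$, but otherwise the arguments coincide.
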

Notice that the inequality between $u^*$ and $v^*$ follows
from the inequality between $u^*$ and $Au$ and that between
$Au$ and $v^*$.

\section{The Jensen property}

\begin{definition}
We say that a family $\Phi$ 
has the Jensen property if for each $B > 0$, there 
is a constant $C$ such that for all intervals $I$ and all nonnegative 
$f$ with $\rho(f) \leq B$ we have
\[
\phi_I\left(\fint_I f(x) \, dx\right) \leq C \fint_I \phi_x(f(x)) \, dx.
\]
\end{definition}

If $\Phi_x(t) = \Phi(t)$ is a convex function, this holds by 
Jensen's inequality.  In fact, by the proof of Jensen's inequality,
if $\Phi(t)$ is a function such that there is some
$C > 0$ such that the graph of $C \Phi(t)$ lies above every tangent line
to the graph of $\Phi(t)$, then $\Phi(t)$ has the
Jensen property.  As a consequence, if there is some
$a > 0$ such that $\Phi(0) = 0$ and $\Phi(a) > 0$, and such that
$\Phi$ is linear between $t=0$ and $t=a$ and convex in the
region $t \geq a$, then the family $\Phi_x(t) = \Phi(t)$
has the Jensen property.

Observe that if the family $\Phi$ has the Jensen property and 
the function $\alpha$ is convex, then the family 
$\Phi \circ \alpha$ has the Jensen property as well.  To see this, 
observe that
\[
  (\Phi \circ \alpha)_I(\lambda) =
  \fint_I \Phi_x(\alpha(\lambda)) \, dx =
  \Phi_I(\alpha(\lambda))
\]
so $(\Phi \circ \alpha)_I = \Phi_I \circ \alpha$.  
Suppose that $\rho_{\Phi \circ \alpha}(f) \leq B$.  Then 
$\rho_{\Phi}(\alpha(f)) \leq B$ so 
\[
\Phi_I \circ \alpha \left(\fint_I f(x) \, dx\right) \leq 
\Phi_I \left(\fint_I \alpha(f(x)) \, dx\right) \leq 
C  \fint_I \Phi_x(\alpha(f(x))) \, dx.
\]

\begin{theorem} If the family $\Phi$ is slowly changing on intervals, and if 
$\Phi_{I,-}$ is convex for each interval $I$,  
then the family has the Jensen property.
\end{theorem}

\begin{proof}
We prove a stronger inequality.  Suppose that 
$\rho(f) \leq A$.  Now notice that 
the average $\fint_{I} \Phi_{I,-}(f(y)) \, dy$ is at most
$A/m(I)$.  

So by Jensen's inequality,
\[
  \Phi_{I,-} \left(\fint_{I} f(y) \, dy \right) \leq A/m(I).
\]
By the condition of slowly changing on intervals, we have 
\[
  \Phi_{I,+} \left(\fint_{I} f(y) \, dy \right) \leq C \Phi_{I,-} \left(\fint_{I} f(y) \, dy \right).
\]
Thus 
\[
\begin{split}
\Phi_{I} \left(\fint_{I} f(y) \, dy \right) &\leq 
\Phi_{I,+} \left(\fint_{I} f(y) \, dy \right) \\ &\leq
C \Phi_{I,-} \left(\fint_{I} f(y) \, dy \right) \\ &\leq
C \fint_{I} \Phi_{I,-}(f(x)) \, dx \\ &\leq
C \fint_{I} \Phi_{x}(f(x)).
\end{split}
\] 

\end{proof}

\begin{example}
  If $\Phi_x(t) = t w(x)$, where $w(x) \in L^1$,
  we have that the Jensen condition becomes
\[
\frac{w(I)}{m(I)} \fint_I f(x) \, dx \leq C \fint f(x) w(x) \, dx.
\]
Now the infimum of the right hand side, among all functions with a 
given average on $I$, is 
\[
\left( \fint_I f(x) \, dx \right) \essinf_{x \in I} w(x).
\]
So the Jensen property is equivalent to 
\[
\frac{w(I)}{m(I)} \leq C \essinf_{x \in I} w(x)
\]
which is the $A_1$ condition on weights.  
\end{example}

\begin{example}
If instead we have $\phi_x(t) = t^p w(x)$, the property becomes
\[
  \frac{w(I)}{m(I)} \left( \fint_I f(x) \, dx \right)^p
   \leq C \fint f(x)^p w(x) \, dx
\]
which becomes
\[
  \left(\fint_I f(x) \, dx \right)^p
  \leq C \frac{1}{w(I)} \int_I f(x)^p w(x) \, dx.
\]
This is equivalent to the $A_p$ condition.
\end{example}

We now show how to test the Jensen property in a large 
number of cases.  
Suppose that each $\Phi_x(t)$ is in 
$C^1$ with respect to $t$, and continuous in $x$, and that 
$\Phi_x'$ is continuous in $t$ and $x$ 
and strictly increasing for each $x$.  
Let $I$ be an interval in 
question.  Suppose that there is some $f$ with 
\[
\fint_I \Phi_x(f(x)) \, dx = A
\]
and 
\[
\Phi_I \left( \fint_I f(x) \, dx \right) > B.
\]
Then there is some $n$ and some function $f_1$ that is constant on dyadic 
subintervals of $I$ of size $2^{-n} | I |$ such that 
\[
\fint_I \Phi_x(f_1(x)) \, dx = A
\]
and 
\[
\Phi_I \left( \fint_I f_1(x) \, dx \right) > B.
\]
Let $y_j$ be the value of $f_1$ on the $j^{th}$ subinterval.  The condition
\[
\fint_I \Phi_x(f_1(x)) \, dx = A,
\]
or equivalently 
\begin{equation}\label{eq:jensentestnormconstraint}
 \frac{1}{2^n} \sum_{j=1}^{2^n} \Phi_{I_j}(y_j) = A
\end{equation}
means that the set of possible $y_j$ is compact, and thus 
\[
\Phi_I \left( \fint_I f_1(x) \, dx \right) = 
\Phi_I \left( \frac{1}{2^n}\sum_{j=1}^{2^n} y_j \right)
\]
attains a maximum.  
By the method of Langrange multipliers, the maximum must occur when 
\[
\Phi'_I \left(\frac{1}{2^n} \sum_{j=1}^{2^n} y_j \right) = 
\lambda \frac{1}{2^n} \Phi'_{I_j}(y_j)
\]
for each $j$, where $\lambda$ is some constant.
It is clear that $\lambda$ cannot be $0$.  
Now notice that the left hand side is constant 
in $j$, and so we have that 
\[
y_j = \Phi'^{-1}_{I_j}(c_n)
\]
for some constant $c_n$ independent of $j$, 
chosen so that the condition \eqref{eq:jensentestnormconstraint}
is satisfied.  Note that for each $c_n$, this condition 
gives a unique solution for the $y_j$, because 
$\Phi_{I_j}'$ is strictly increasing. 
Notice that the maximum value of the quantity we are maximizing 
cannot decrease with $n$, since every function that is constant on 
dyadic subintervals for a given $n$ is also constant on those intervals 
for larger $n$.  

Suppose for now that there are two continuous,
strictly increasing functions
$E(t)$ and 
$D(t)$ such that $E(t) \leq \Phi_x'(t) \leq D(t)$ for each $x$, and that 
$\Phi_{I,-}(t)$ is a strictly increasing function of $t$.
We can generally remove these assumptions later by 
approximation arguments. 
Observe that the left side of 
\eqref{eq:jensentestnormconstraint} is at least 
$\Phi_{I,-}(D^{-1}(c_n))$ and so 
$c_n < D(\Phi_{I,-}^{-1}(A))$.  
Also, each $y_j$ is at most 
$E^{-1}(D(\Phi_{I,-}^{-1}(A)))$. 

%

Since the $c_n$ are bounded, there is
 a sequence $n_j$ for which $c_{n_j}$ converges to some 
$c$.  Notice that, for a 
given $x$, if we let $I_{n}(x)$ be the dyadic subinterval of size 
$2^{-n} | I |$ to which $x$ belongs, that 
$\Phi'_{I_n(x)}(t)$ approaches $\Phi'_x(t)$ for fixed $t$.  
Then 
$\Phi'^{-1}_{I_n (x)}(t)$ approaches $\Phi'^{-1}_x(t)$ for
fixed $t$.  

Let $f_j$ be the function that equals  
$\Phi'^{-1}_{I_k}(c_{n_j})$ on the dyadic interval 
$I_k$ of size $2^{-n_j}$. 
Then as $j \rightarrow \infty$, the functions 
$f_j$ approach 
\[
f(x) = \Phi'^{-1}_x(c)
\]
pointwise.
Note that $f$ and all the $f_j$ are dominated pointwise by
$E^{-1}(D(\Phi^{-1}_{I,-}(A)))$.  
By the dominated convergence theorem, one has that
\[
\fint_I \Phi_x(f(x)) \, dx = A \quad \text{and} \quad
\Phi_I\left( \fint_I f(x) \, dx \right) > B.
\]
From this it follows that we need only check functions of the 
form
\[
f(x) = \Phi'^{-1}_x(c)
\]
in the Jensen property. 

\begin{example}
Let $\Phi_x(t) = w(x) t^p$.  We will for now assume that 
$w$ is continuous and bounded above and below, although these 
assumptions can be removed by approximation arguments, since 
the final inequality we get does not depend on these bounds.  
Then 
we see that to check the Jensen property, we need only check functions of 
the form $(c/p)^{1/(p-1)} w(x)^{-1/(p-1)}$.  Now, in this case the Jensen 
condition is unchanged by multiplying $f$ by a constant, so we may 
simply check it for $f = w^{-1/(p-1)}$.  
Using the fact that $(w^{-1/(p-1)})^p w = w^{-1/(p-1)}$, 
the Jensen condition becomes
\[
\left( \fint w^{-1/(p-1)} \, dx \right)^p \leq \frac{C}{w(I)} \int w^{-1/(p-1)} \, dx
\]
which simplifies to 
\[
\left(\fint_I w \, dx \right) \left( \fint w^{-1/(p-1)} \right)^{p-1} \leq C
\]
which is the usual definition of the $A^p$ condition. 
\end{example}

\begin{example}
Let $\Phi_x(t) = w(x) t^{p(x)}$.  We will for now assume that 
$w$ is continuous and bounded above and below, although these 
assumptions can be removed by approximation arguments.  
We will also assume that $p$ is bounded below by a number greater 
than $1$, is bounded above, and is continuous.  Then 
we see that to check the Jensen property, we need only check functions of 
the form $(c/p(x))^{1/(p(x)-1)} w(x)^{-1/(p(x)-1)}$.

Notice that in this case, if the Jensen property is satisfied for a
particular $f$, it will be satisfied for any function of the form
$\alpha(x) f(x)$, where $\alpha$ is bounded above and below away from $0$.
The constant may be different, but depends only on the bounds of $\alpha$.
Thus, we need only check functions of the form
\[
  f(x) = c^{1/(p(x)-1)} w(x)^{-1/(p(x)-1)}.
\]
\end{example}

\section{Maximal Functions and the Jensen Property}
We begin with the following lemma.

\begin{lemma}\label{lemma:maximal}
Let $\Phi$ and $\Psi$ be families such that 
$\Psi$ has the $A_\infty$ property and the 
good ratio property. 
Suppose for each $z$ in the unit disc, 
there is a function 
$\Phi_z(t)$ such that there is some constant $K$ such that 
$\Phi_z(t) \geq K \Phi_{I_z}(t)$, where 
$I_z$ is the base of the tent with vertex $z$. 
 For each $x$ on the unit circle, define  
\[
\Phi(f)^*(x) = \sup_{z \in \Gamma(x)} \Phi_z(f(z))
\]  
where $\Gamma(x)$ is the cone with vertex $x$. 

Then
for every $A > 0$, there is a $C$ such that if 
\[
\rho_{\Psi}(f^*) = A
\]
then 
\[
 \int \Psi_x ( \Phi_x^{-1} ((2/K) \Phi(f)^*(x)) \, dx \geq C.
\]

If instead we assume that $\Psi \circ \Phi$ has the $A_\infty$ property
and the 
good ratio property, and the family $\Psi$ is uniformly doubling, 
then
for every $A > 0$, there is a $C>0$ such that if 
\[
\rho_{\Psi\circ \Phi}(f^*) = A
\]
then
\[
\int \Psi_x (  ( \Phi(f)^*(x)) \, dx \geq C'.
\]
\end{lemma}

\begin{proof}
Let $B$ be a basic interval in the set 
$\{x : f(x) > \lambda\}$.  We can cover $B$ by intervals such that 
each is the base of the tent for a point $z$ in the unit circle 
such that $f(z) > \lambda$.   By the Vitali covering lemma, there is a 
disjoint countable set of such intervals whose total measure is at least 
$1/5$ of the measure of $B$.  Let $I_z$ be some such interval, where 
$z$ is the top of the tent for which $I_z$ is the base.  

Recall that $\Phi_{I_z}(\lambda)$ is the average of $\Phi_x(\lambda)$ on the 
interval $I_z$.  Thus $\Phi_x(\lambda) \leq 2 \Phi_{I_z}(\lambda)$
on a set of 
measure at least $| I |/2$. 
Thus, 
$\Phi_{z}(f(z)) \geq \frac{K}{2} \Phi_x (\lambda)$
on that set.
So 
\[
\Phi_x^{-1}((2/K) \phi_{I_z}(f(z))) \geq \lambda
\]
on that set.  
%

Now, for each $z$ in the unit disc, recall that 
\[
\Phi(f)^*(x) = \sup_{z \in \Gamma(x)} \Phi_{z}(f(z)).
\]  
Thus 
\[
m( \{  \Phi_x^{-1}( (2/K) \Phi(f)^*(x)) > \lambda \} \cap B) > (1/5) m(B)
\]
for any fundamental interval $B$ in $\{x : f^*(x) > \lambda\}$. 

Now suppose $\Psi$ is a family with corresponding measures 
$\mu_\lambda$.  Also suppose that $\Psi$ has the  $A_\infty$ property
and the good ratio property.  
Suppose that $\rho_{\Psi}(f^*) = A$.  
It then follows by Theorem 
\ref {thm:twofunctionmeasureinequality} 
 there is some constant $C$ such that 
\[
 \int \Psi_x ( \Phi_x^{-1} ((2/K) \Phi_x(f)^*(x)) \, dx \geq C.
\]

Now, suppose that $\Psi \circ \Phi$ has the good ratio property
and the $A_\infty$ property, and moreover that $\Psi$ is 
uniformly doubling.  We may replace 
$\Psi$ with $\Psi \circ \Phi$ in the above to see that if 
\[
\int \Psi_x(\Phi_x(f^*(x))) \, dx = A 
\]
then 
\[
\int \Psi_x (  (2/K) \Phi(f)^*(x)) \, dx \geq C
\]
Since $\Psi$ is uniformly doubling we have that 
\[
 \int \Psi_x (  ( \Phi(f)^*(x)) \, dx \geq C'
\]
for some constant $C'$ depending only on the doubling constant 
and $C$.  
\end{proof}

There is a stronger version of the above result if 
$\Phi$ is slowly changing on intervals.  
\begin{lemma}\label{lemma:maximalsci}
Suppose $\Phi$ is slowly changing on intervals
and uniformly doubling.  Then for every $A$ such that 
$\rho(f) = A$, there is a constant $C$ such that 
\[
\Phi_x( (M_{HL}f)(x)) \leq C \sup_{I \ni x} \Phi_I \left(\fint_I f(x) \, dx\right).
\]
\end{lemma}
\begin{proof}
Let $\delta > 0$.  
We can find an $\varepsilon > 0$ and an interval $I$ containing 
$x$ such that 
\[
  M_{HL}f(x) \leq (1+\varepsilon) \fint_{I_x} f(y) \, dy.
\]
and
such that 
\[
  \Phi_{x}(M_{HL} f(x)) \leq
  \Phi_{x}\left((1+\varepsilon)\fint_{I_x} f(y) \, dy\right)
  \leq
  (1+\delta) \Phi_{x}\left(\fint_{I_x} f(y) \, dy\right).
\]
By the condition of slowly changing on intervals, this is at most 
\[
C (1+\delta) \Phi_{I,-}\left(\fint_{I_x} f(y) \, dy\right).
\]
This is at most
\[
C (1+\delta) \Phi_{I}\left(\fint_{I_x} f(y) \, dy\right).
\]
Now let $\delta \rightarrow 0$.  
\end{proof}
Because of this theorem, we can prove an analogue of the latter part of
Lemma \ref{lemma:maximal} under the assumption that
$\Phi$ is \sci\ and $\Psi$ is uniformly doubling, and we do not need to
assume that $\Psi \circ \Phi$ has the good ratio or $A_\infty$ property.

Now suppose that in addition to the assumptions of 
Lemma \ref{lemma:maximal}, or the analogue to that Lemma that
follows from Lemma \ref{lemma:maximalsci}, 
$\Phi$ has the Jensen property, and 
also that 
$\Psi$ is $L^1$ dominating and uniformly doubling.
Suppose in addition that 
$f$ is defined on the unit circle and that 
$f(z) = \fint_{I_z} f(x) \, dx$ for each $z$ in the unit disc.
This means that $f^* = M_{HL}(f)$.  

Then $\rho_{\Phi}(f) \leq A'$ for the constant $A'$ in the 
definition of $L^1$ dominating.  
Then 
for each $z$ in the unit disc, we have that 
\[
\Phi_{I_z}(f(z)) \leq C'' \fint_{I_z} \Phi_x(f(x)).
\]
by the Jensen property.  
The constant $C''$ here depends on $\rho_{\Phi}(f)$, and 
thus on $A$.  
Therefore
\[
\Phi(f)^*(x) \leq C'' K M_{HL}(\Phi_x(f)).
\]
It then follows that 
\[
 \int \Psi_x (  C '' K M_{HL}(\Phi_x(f)) ) \, dx \geq C'.
\]
 From the fact that $\Psi$ is uniformly doubling, it follows that 
\[
 \int \Psi_x (  M_{HL}(\Phi_x(f)) ) \, dx \geq C'''
\]
for some constant $C'''$ that depends on $A$.  
So for every $A$ there is a $B = C'''$ such that if 
$\rho_{\Psi \circ \Phi}(f^*) = A$ then 
$\rho_{\Psi}(M_{HL}(\Phi_x(f))) \geq B$.  

Now suppose that for every $B$ there is a $B'$ such that if 
$\rho_{\Psi}(M_{HL} g) \geq B$ then $\rho_{\Psi}(g) \geq B'$.
It then follows that 
$\rho_{\Psi}(\Phi_x(f)(x)) \geq B'$, i.e.\ that 
$\rho_{\Psi \circ \Phi}(f) \geq B'$.  

Therefore, by Lemma \ref{lemma:geqtoleq}, we have the 
following theorem.
\begin{theorem} \label{thm:maximalbound}
Either suppose 
 that the family $\Psi \circ \Phi$ has the 
good ratio property and the 
$A_\infty$ property and that $\Phi$ has the Jensen property, 
or suppose that $\Phi$ is slowly changing on intervals and 
$\Phi_{I,-}$ is convex for each interval $I$.
Also assume that 
$\Psi$ is $L^1$ dominating, uniformly doubling, and that 
for every $B > 0$, there is a $B' > 0$ such that if
$\rho_{\Psi}(M_{HL} g) \geq B$ then $\rho_{\Psi}(g) \geq B'$.
Then
for every $A>0$ there is a $B$ such that 
\[
\| M_{HL} f \|_{\Psi \circ \Phi, (A)} \leq 
\| f \|_{\Psi \circ \Phi, (B)}(f).
\]
\end{theorem}
Notice that we do not need the normal assumption of 
finiteness of the left hand side of the inequality that 
comes from Lemma \ref{lemma:geqtoleq}.  To see this, we may 
apply the theorem to $f_n(x) = \min(f(x),n)$ and 
apply the montone convergence theorem as $n \rightarrow \infty$. 

We note the theorem also applies if $f$ is harmonic and we replace 
$M_{HL} f$ by a constant times the nontangential maximal function 
of $f$.

\section{Nontangential maximal functions and harmonic conjugates}



Let $f^*$ be the nontangential maximal function of $f$.
Notice that the
nontangential maximal function of the function $\log^{+} |f(z)|$ is
$\log^{+} f^*$.
%
If we let $u$ be the harmonic function that equals 
$\log^+|f|$ on the boundary, we can use the subharmonicity of 
$\log^+|f|$ and Theorem \ref{thm:maximalbound} to prove the following 
theorem.  
The idea to do this seems to go back to 
\cite{Stein-Weiss_harmonic_several} for the constant exponent case.

\begin{theorem}\label{thm:ntmaxbound}
Suppose that the assumptions of Theorem \ref{thm:maximalbound} hold, 
and that $\Psi \circ \Phi \circ \log^+$ is uniformly doubling.  
Then for every number $A > 0$ there is a $B > 0$ such that
\[
\| f^* \|_{\Psi \circ \Phi \circ \log^+, (A)} \leq 
\sup_{0 < r < 1} \| f_r \|_{\Psi \circ \Phi \circ \log^+, (B)}.
\]
\end{theorem}
The family $\Psi \circ \Phi \circ \log^{+}$ is formed by composing the 
functions $\Psi_x \circ \Phi_x \circ \log^{+}$. 
\begin{proof}
Let $0 < r < 1$.  
Apply Theorem \ref{thm:maximalbound} to the harmonic function $u$ with 
values on unit disc equal to $\log^+|f(re^{i\theta})|$, and use the 
fact that $\log^{+}|f(z)|$ is subharmonic, as well as the fact that 
$(\log^+ |f_r|)^* = \log^{+} f_r^*$,
where $f_r(z) = f(rz)$ is a dilation of $f$.  
Also use the fact that $M_{HL} u > K u^*$ for some $K$, and the fact that 
$\Psi \circ \Phi \circ \log^+$ is uniformly doubling.  
As 
$r \rightarrow 1$ the resulting maximal function of 
$\log^{+}|f(re^{i\theta})|$ increases to the 
maximal function of $\log^{+} f(e^{it})$.  
\end{proof}
Note that we work with dilations in the above proof, since the
subharmonicity property does not necessarily work if we
apply it to the whole unit disc at once.  For example,
the reciprocal of a singular inner function has
boundary values equal to $1$ in absolute value almost
everywhere, but such a function is always greater than or
equal to $1$ in absolute value inside the disc. 

This theorem has as a corollary the well known fact that
the nontangential maximal operator is bounded from
$H^p$ to $L^p$ for $0 < p < \infty$.  (Of course, it is
trivially bounded from $H^\infty$ to $L^\infty$).  To see this let 
$\Phi(t) = e^{qt}$ and $\Psi(t) = t^{p/q}$ for large enough $t$ and for 
$q < p$.  (For small $t < 1$ we adjust the definition of $\Phi$
so that it is continuous everywhere, linear for $t < 1$, and
$\Phi(0) = 0$.) 



\section{The Smirnov Class} 
Recall that a function analytic in the unit disc is in the
Nevanlinna class if
\[
  \frac{1}{2\pi}\int_0^{2\pi} \log^+|f(re^{i\theta})| \, d \theta
\]
is bounded independently of $r$.  This is equivalent to $f$ being the
ratio of an $H^\infty$ function with a nonvanishing $H^\infty$ function.
Any such function converges nontangentially almost everywhere on the
boundary.  
If the $H^\infty$ function in the denominator can be chosen to be
outer, we say $f$ is in the Smirnov class.  This happens if
and only if 
\[
  \lim_{r \rightarrow 1}
  \frac{1}{2\pi}\int_0^{2\pi} \log^+|f(re^{i\theta})| \, d \theta
  =
  \frac{1}{2\pi}\int_0^{2\pi} \log^+|f(e^{i\theta})| \, d \theta. 
\]

Suppose that the conditions of Theorem
\ref{thm:ntmaxbound} hold, and that the families 
$\Phi$ and $\Psi$ are as in the statement of the theorem. 
Let the modular family $\Theta$ be defined by 
  $\Theta = \Psi \circ \Phi \circ \log^{+}$.  Also suppose that 
  the family $\Psi \circ \Phi$ is $L^1$ dominating.  

Then any function in the space $H_{\Theta}$ is in the
Nevanlinna class, since $\Psi \circ \Phi$ is $L^1$ dominating.
Since the
maximal function $f^*$ satisfies the property that 
$\int_0^{2\pi} \log^+ f^*(e^{i\theta}) \, d\theta < \infty$ by 
Theorem \ref{thm:ntmaxbound}, 
the dominated convergence theorem implies that $H_{\Theta}$ is a
subset of the Smirnov class.  

Now suppose that $f$ is in the Smirnov class.  Then we may write
$f = BSF$ where $B$ is the Blaschke product of the zeros of $f$,
$S$ is a singular inner function, and $F$ is an outer function.
Furthermore,
\[
  F(z) = \exp \left\{ \frac{1}{2\pi}
    \int_0^{2\pi} \frac{e^{it} + z}{e^{it} - z} \log |f(e^{it})| \, dt,\
    \right\},
  \]
  and $\log |f(z)| \in L^1$.  
Let $F_b$ have the same definition as $F$, except with
$\log^+|f(e^{it})|$ instead of $\log|f(e^{it})|$ in the integral. 
Let $F_s$ have the same definition as $F$, except with
$-\log^-|f(e^{it})|$ instead of $\log|f(e^{it})|$ in the integral, 
where $\log^-(t) = \max(-\log(t), 0)$. 
Then $|F_s| \leq 1$.
Also, $|B| \leq 1$ and $|S| \leq 1$.

Now, suppose that $\log^+ |f(e^{it})| \in L_{\Theta}$.
Let $u$ be the harmonic function agreeing with
$\log^+ |f(e^{it})|$ on the boundary of the unit disc.
Since $|e^w| = e^{\Rp w}$ and the Poisson kernel is the real
part of $\frac{e^{it} + z}{e^{it} - z}$, we have
that
\[
  \log^{+} |F(z)| = \log |F_b(z)| = u(z)
\]
on the boundary of the unit disc. 
Since $F_b(z)$ is nonvanishing, 
$\log |F_b(z)|$ is harmonic, and so 
\[
 \log |F_b(z)| = u(z)
\]
even in the interior of the disc.  

For each $A > 0$, there is a $B > 0$ such that if 
$\rho_{\Psi\circ\Phi \circ \log^{+}}(f) \leq A$ then 
$\rho_{\Psi\circ\Phi}(u^*) \leq B'$ 
by Theorem \ref{thm:maximalbound}.  
But this implies that 
$\rho_{\Psi \circ \Phi}(u_r) \leq B'$ for all $0 < r \leq 1$,
which implies that $\rho_{\Theta}(f_r) \leq B$ for each $r$.
Thus any function $f \in H_{\Theta}$ is in the Smirnov class, and
any function in the Smirnov class with boundary values in
$L_{\Theta}$ is in $H_{\Theta}$. 




\section{Equivalence of Definitions of Hardy Space}
We summarize our results on Hardy spaces in the following theorem. 
\begin{theorem}\label{thm:hardydefs}
  Let $\family{\Phi}$ and $\family{\Psi}$ be modular families of functions. 
  Let $\family{\Theta}$ be the family defined by 
  $\Theta = \Psi \circ \Phi \circ \log^{+}$.  
  Suppose that:
  \begin{itemize}
  \item The family $\Theta$ is uniformly doubling and has the
    $A_\infty$ property and the good ratio property.
  \item The family $\Phi$ has the Jensen property.
  \item The family $\Psi$ is $L^1$ dominating, uniformly doubling, and
    the Hardy-Littlewood maximal operator is bounded for $\Psi$ in
    the sense that for all $A > 0$, there is a $B > 0$ such that if
    $\rho_{\Psi}(g) \leq B$ then
    $\rho_{\Psi}(M_{HL}g) \leq A$. 
  \item The family $\Psi \circ \Phi$ is $L^1$ dominating. 
  \end{itemize}
  The following are equivalent for an analytic function
  $f =u + iv$ with domain equal to $\mathbb{D}$: 
  \begin{enumerate}
    \item $f \in H_{\Theta}$.
    \item\label{mainthmlist:second} $f^* \in L_{\Theta}$.
    \item $u^* \in L_{\Theta}$.
    \item $Au \in L_{\Theta}$. 
    \item\label{mainthmlist:smirnov} The function $f$ is in the Smirnov class and
      $f(e^{i\theta}) \in L_{\Theta}$.
    \end{enumerate}
    Furthermore, given any $A > 0$, there exists a
    $B > 0$ such that if the modular of one of the above
    functions in \eqref{mainthmlist:second}--\eqref{mainthmlist:smirnov} 
    or $\rho_{H_\Phi}(f)$ is bounded by $B$,
    then the modular of
    any other of them is bounded by $A$. 
\end{theorem}
We remark that the inverse of a singular inner function shows
that the condition that $f$ is in the Smirnov class in
\eqref{mainthmlist:smirnov} is 
necessary, and that merely assuming $f$ has nontangential
limit almost everywhere and $f(e^{i\theta}) \in L_{\Phi}$ is not
enough to make $f \in H_{\Phi}$. 

We now give some examples. 
\begin{example}
Let $w(x)$ be a weight such that the maximal operator is bounded 
on $L^a(w)$ for some $a > 1$.  In other words, let 
$w$ be an $A_\infty$ weight. Let 
$p > 0$.
Define $\Phi_x(t) = \Phi(t) = e^{t p / a}$ for $t \geq 1$,
and adjust the definition of
$\Phi(t)$ for $t \leq 1$ so that $\Phi(t)$ is linear for $0 \leq t \leq 1$,
continuous everywhere, and passes through the origin.
Let $\Psi_x(t) = t^a w(x)$ for
$t \geq 1$. 
Then  the theorem in this section applies.  In this case, 
$\Theta_x(t) = t^p w(x)$ for large enough $t$. 
Note that if we apply this method to 
an $A^p$ weight, defined by 
\[
\fint_I w \, dx \left( \fint w^{-1/(p-1)} \right)^{p-1} \leq C,
\]
we do not need to know that the maximal operator is bounded 
on $L^p(w)$, but only on $L^a(w)$ for some $a$.

Another way to get the result is to suppose that 
$w \in A_p$ for some $p > 1$.
Then $w^{1/2} \in A_p$.  
Let
$\alpha(t) = e^{t/2}$ for $t \geq 2$.  
We adjust the definition of
$\alpha$ so that it is $0$ at the origin and linear for
$0 \leq t \leq 2$.  Let $\Phi_x(t) = (\alpha(t))^p w(x)^{1/2}$,
so that 
$\Phi_x(t) = e^{t p/2} w(x)^{1/2}$ for $t \geq 2$. Let  
$\Psi_x(t) = t^{2}$.
Notice that $\Phi_x(t)$ has the Jensen property because it 
is of the form $(\alpha(t))^p w(x)^{1/2}$, where
$\alpha$ is convex and the functions 
$t^p w(x)^{1/2}$ have the Jensen property.  
Then $\Theta_x(t) = t^p w(x)$ for $t \geq 2$, and
$\Theta$ is uniformly doubling and
$A_\infty$.  Notice that with this method, we did not
need to know beforehand that the maximal operator is bounded
on any space except unweighted $L^2$. 
\end{example}

\begin{example}
Suppose that $w \in A_p$ for some $p > 1$.  
Then $w \in A_q$ for some $q < p$.  Say that 
$p = b q$.  Then $w^{1/b} \in A^q$.  
Let $\Phi_x(t) = t^q w(x)^{1/b}$ and 
$\Psi_x(t) = t^{b}$.  Then 
$\Theta_x(t) = (\log^{+} t)^{p} w(x)$ and the 
theorem applies.
\end{example}

%
%

\begin{example} Suppose $p(x)$ is log-H\"{o}lder continuous 
and that $0 < p_{-} < p_{+} < \infty$.  
Let
$\Psi(t) = t^{p_-/q}$ for some fixed $q$ such that
$0 < q < p_{-}$.
Define
\[
  \Phi_{x}(t) = \begin{cases}
    e^{p(x) q/p_{-}} t^{p(x)} &\text{ for $0 \leq t < p_{-}/q$}\\
    e^{t p(x) q/p_{-}} &\text{ for $p_{-}/q \leq t$}
  \end{cases}.
\]
and $\family{\Theta} = \family{\Psi} \circ \family{\Phi} \circ \log^{+}$, so
$\Theta_x(t) = t^{p(x)}$ for $t \geq e^{p_{-}/q}$.  
  
With these definitions, the conditions
of Theorem \ref{thm:hardydefs} are satisfied.
The conditions on $\Psi$ are clear, as is the fact that
$\Psi \circ \Phi$ is $L^1$ dominating.  For the condition on $\Phi$, 
notice that since $t^{p(x)}$ is \sci\ by Example \ref{ex:powersci},
so is
$\Phi_x(t)= (\alpha(t))^{p(x)}$ by
Lemma \ref{lemma:scicomp}, where $\alpha(t) = e^{t q/p_{-}}$ for
$t > p_{-}/q$ and $\alpha(t)$ is linear and passes through the origin
for $t \leq p_{-}/q$.
(In fact, $\alpha(t)$ is even convex.) 
By Example \ref{ex:powersci}, the family $\Theta$ is \sci. 
  

This defines the variable
  exponent Hardy space $H^{p(\cdot)}$.  
\end{example}

\begin{example} 
Let $1 < s < q < \infty$ be given and suppose that 
$p(x)$ is $1/s$ H\"{o}lder continuous.  Let 
\[
\Phi_x(t) = 
            \max(e^{t p(x)s/q}, (1+t)^s) \text{ if $t > 1$}\\
\]
and for $0 \leq t < 1$ define $\Phi_x(t)$ so it is $0$
at the origin,
linear and continuous.
Let $\Psi_x(t) = \Psi(t) = t^{q/s}$. 
Define $\Theta_x = \Psi \circ \Phi_x \circ \log^+$, so 
\[
\Theta_x(t) = 
            \max(t^{p(x)}, (1+\log^+ t)^q) \text{ if $t > e$}.\\
\]
  We call the resulting Hardy space $H_{\Theta} = H^{p(\cdot)}_{\log^q}$.

   We claim that Theorem \ref{thm:hardydefs} applies to this space. 
The conditions are routine to check except for the conditions about 
\sci (which imply the conditions stated in the theorem). 

We first claim that $\family{\Phi}$ is \sci. Note that it is enough to
show that $\family{\Phi} \circ \log^+$ is \sci\ by Lemma
\ref{lemma:scicomp}, since the function $\exp$ is increasing.  Note
that
\[
\Phi_{x}(\log^+ t) = 
            \max(t^{p(x)s/q}, (1+\log^+ t)^s) \text{ if $t > 1$}.\\
\]

This family is \sci\ by Example \ref{ex:logsci}. 
Thus, $\Phi$ is \sci.
To prove that $\family{\Theta}$ is \sci, we may apply
Lemma \ref{lemma:powersci} or Example \ref{ex:logsci} again. 
Thus, the conclusions of Theorem \ref{thm:hardydefs}
hold.
\end{example}


\providecommand{\bysame}{\leavevmode\hbox to3em{\hrulefill}\thinspace}
\providecommand{\MR}{\relax\ifhmode\unskip\space\fi MR }
\providecommand{\MRhref}[2]{%
  \href{http://www.ams.org/mathscinet-getitem?mr=#1}{#2}
}
\providecommand{\href}[2]{#2}

\end{document}